\newcommand{\sect}[1]{\section{#1}\setcounter{equation}{0}}
\font\mbn=msbm10 scaled \magstep1
\font\mbs=msbm7 scaled \magstep1
\font\mbss=msbm5 scaled \magstep1
\newcommand{\Di}      {\mathbb{D}}
\newcommand{\N}       { \mathbb{N}}
\newcommand{\Z}        {\mathbb{Z}  }  
\newcommand\Co           {{\mathbb C}}
\newtheorem{Th}{Theorem}[section]
\newtheorem{Lm}[Th]{Lemma}
\newtheorem{Prop}[Th]{Proposition}
\newtheorem{R}[Th]{Remark}
\newtheorem*{Th A}{Theorem A}
\newtheorem*{Th B}{Theorem B}
\begin{document}

\title[On Stable Rank of $H^\infty$ on Coverings of Finite Bordered Riemann Surfaces]{On Stable Rank of ${\mathbf H^\infty}$ on Coverings of Finite Bordered Riemann Surfaces}
\author{Alexander Brudnyi}
\address{Department of Mathematics and Statistics\newline
\hspace*{1em} University of Calgary\newline
\hspace*{1em} Calgary, Alberta, Canada\newline
\hspace*{1em} T2N 1N4}
\email{abrudnyi@ucalgary.ca}

\keywords{Bass stable rank, $B$-ring, bounded holomorphic function, unbranched covering, elementary matrix}
\subjclass[2010]{Primary 30H50. Secondary 46J10.}

\thanks{Research is supported in part by NSERC}

\begin{abstract}
We prove that the Bass stable rank of the algebra of bounded holomorphic functions on an unbranched covering of a finite bordered Riemann surface is equal to one.
\end{abstract}

\date{}

\maketitle

\sect{Formulation of Main Results}
Let $S'$ be a (not necessarily connected) unbranched covering of a finite bordered Riemann surface $S$. In this paper we continue the study initiated in \cite{Br2} of the algebra $H^\infty(S')$ of bounded holomorphic functions on $S'$. (We write $H^\infty:=H^\infty(\Di)$, where $\Di\subset\Co$ is the open unit disk.) It was shown in our previous work that algebras $H^\infty(S')$ and $H^\infty$ share many common properties (e.g., they are Hermite, their maximal ideal spaces are two-dimensional with vanishing second \v{C}ech cohomology groups, etc., see \cite{Br2}--\cite{Br4} for the corresponding results). The purpose of this paper is to prove that these algebras have also the same Bass stable rank.
The latter notion is defined as follows.

Let $A$ be an associative ring with unit. For a natural number $n$ let
$U_n(A)$ denote the set of {\em unimodular} elements of $A^n$, i.e.,
\[
U_n(A)=\left\{(a_1,\dots, a_n)\in A^n\, :\,  Aa_1+\cdots +Aa_n=A\right\}.
\]
An element $(a_1,\dots, a_n)\in U_n(A)$ is called {\em reducible} if there exist $c_1,\dots, c_{n-1}\in A$ such that
$
(a_1+c_1 a_n,\dots, a_{n-1}+c_{n-1}a_n)\in U_{n-1}(A).
$
The {\em stable rank} ${\rm sr}(A)$ is the least $n$ such that every element of $U_{n+1}(A)$ is reducible. 
The concept  of the stable rank introduced by Bass \cite{B} plays an important role in some stabilization problems of algebraic $K$-theory.  
Following Vaserstein \cite{V2} we call a ring of stable rank $1$ a $B$-ring. (We refer to this paper for some examples and properties of $B$-rings.) 

In  \cite{T} Treil proved the following result.
\begin{Th A}
Let $f,g\in H^\infty$, $\|f\|_{H^\infty}\le 1$, $\|g\|_{H^\infty}\le 1$ and
\begin{equation}\label{treil1}
\inf_{z\in\Di}(|f(z)|+|g(z)|)=:\delta>0.
\end{equation}
Then there exists a function $G\in H^\infty$ such that the function $\Phi=f+gG$ is invertible in $H^\infty$, and moreover $\|G\|_{H^\infty}\le C$, $\|\Phi^{-1}\|_{H^\infty}\le C$, where the constant $C$ depends only on $\delta$.
\end{Th A}
\noindent (Here and below for a normed space $B$ its norm is denoted by $\|\cdot\|_B$.)

By the Carleson corona theorem condition \eqref{treil1} is satisfied if and only if $(f,g)\in U_2(H^\infty)$. Hence, Treil's theorem implies that $H^\infty$ is a $B$-ring. 

Theorem A was used by Tolokonnikov \cite{To} to prove that algebras $H^\infty(U)$ are $B$-rings for finitely connected domains and some Behrens domains $U$. Until now no other classes of Riemann surfaces $U$ for which $H^\infty(U)$ are $B$-rings were known.  In the present paper, we prove the following extension of Theorem A.
\begin{Th}\label{te1}
Let $S'$ be an unbranched covering of a finite bordered Riemann surface $S$. Let $f,g\in H^\infty(S')$, $\|f\|_{H^\infty(S')}\le 1$, $\|g\|_{H^\infty(S')}\le 1$ and
\begin{equation}\label{treil2}
\inf_{z\in S'}(|f(z)|+|g(z)|)=:\delta>0.
\end{equation}
Then there exists a function $G\in H^\infty(S')$ such that the function $\Phi=f+gG$ is invertible in $H^\infty(S')$, and moreover $\max\bigl\{\|G\|_{H^\infty(S')},\|\Phi^{-1}\|_{H^\infty(S')}\bigr\}\le C$, where the constant $C$ depends only on $\delta$ and $S$.
\end{Th}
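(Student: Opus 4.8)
The plan is to reduce Theorem~\ref{te1} to Treil's Theorem~A on the disk by lifting the problem to the universal cover and exploiting the structure of $H^\infty(S')$ established in \cite{Br2}. First I would recall that a finite bordered Riemann surface $S$ has a double $\widehat S$ (its Schottky double), and that the universal cover of (the interior of) $S$ is the unit disk $\Di$; correspondingly, each connected component of $S'$ is covered by $\Di$, with a deck transformation group $\Gamma$ acting by Möbius transformations. Pulling $f$ and $g$ back along the covering map $\pi\colon\Di\to S'_0$ (for a fixed component $S'_0$) yields $\Gamma$-invariant functions $\tilde f,\tilde g\in H^\infty(\Di)$ with the same sup-norm bounds and with $\inf_{z\in\Di}(|\tilde f(z)|+|\tilde g(z)|)\ge\delta$. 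Applying Theorem~A produces $G_0\in H^\infty$ with $f+gG_0$ invertible and norm control by a constant $C=C(\delta)$; the difficulty is that $G_0$ need not be $\Gamma$-invariant, hence need not descend to $S'$.

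The heart of the argument is therefore an \emph{averaging / projection} step: one must replace $G_0$ by a genuinely $\Gamma$-invariant corrector. Here I would use the structure of $H^\infty(S')$ as a fibre sum or as the fixed subalgebra of $H^\infty$ under $\Gamma$ together with the Hermite/corona-type results from \cite{Br2}. Concretely, I expect the mechanism to be: the set $V$ of $G\in H^\infty$ for which $f+gG$ is invertible with norm at most $2C$, say, is an open subset of a ball in $H^\infty$; it is nonempty by Theorem~A, and it is preserved in a suitable sense under the group action because $f,g$ already are invariant. Combining this with a partition-of-unity or a Michael-type continuous selection over the (compact, finite-dimensional, cohomologically trivial) maximal ideal space $\mathfrak{M}(S')$ should let me glue local invariant solutions into a global $G\in H^\infty(S')$. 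Alternatively, and probably more in the spirit of \cite{Br2}, one realizes $S'$ as a subdomain of a compact Riemann surface and uses that the relevant cohomology obstruction lives in $H^1$ with coefficients in a sheaf that is trivial here, so the invariant lift exists.

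The norm bounds should then come for free up to a constant depending only on $S$: the covering map and the passage between $H^\infty(S')$ and the invariant part of $H^\infty(\Di)$ are isometric on the relevant subspaces, and the only new constant enters through the (finite, $S$-dependent) combinatorics of the covering or the bound on the selection operator. The main obstacle, as indicated, is the invariance/descent step — Theorem~A is a pure disk statement and gives no control over how $G_0$ transforms under deck transformations — so the real content is showing that the solution set of the corona-type equation $f+gG$ invertible is large and flexible enough to contain an element fixed by $\Gamma$, which is exactly where the special geometry of finite bordered surfaces and the earlier results on $H^\infty(S')$ must be brought to bear.
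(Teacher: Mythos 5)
Your reduction to Treil's theorem via the universal cover stalls exactly where you say it does, and the mechanisms you sketch for getting past that point do not work. The set $V=\{G\in H^\infty(\Di)\,:\,\tilde f+\tilde gG\ \text{invertible with bounds}\}$ is not convex (a convex combination of two invertible functions need not be invertible), so no averaging over the deck group $\Gamma$ --- in particular no Forelli-type projection $P$ onto the $\Gamma$-invariant subalgebra, which is a module map over that subalgebra and hence sends $\tilde f+\tilde gG_0$ to $\tilde f+\tilde gP(G_0)$ without preserving invertibility --- can produce an invariant element of $V$. A Michael-type selection likewise requires convex (or at least uniformly contractible) fibres, which you have not established; and the appeal to vanishing of ``the relevant cohomology obstruction'' is not attached to any concrete sheaf or exact sequence. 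So the central step of your argument is missing, not merely deferred.

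The paper avoids descent altogether. It first proves the qualitative statement (Theorem \ref{cor1}) via the Corach--Su\'{a}rez criterion \cite{CS}: $H^\infty(S')$ is a $B$-ring iff every $\Co^*$-valued continuous function on a hull $Z\subset\mathfrak M(H^\infty(S'))$ extends to all of $\mathfrak M(H^\infty(S'))$. One covers $\bar S$ by finitely many disk-like pieces $U_i\subset V_i$ with $V_i\cap S$ simply connected, so that $r^{-1}(V_i\cap S)$ is biholomorphic to a disjoint union of disks, where Treil's theorem applies directly; the local extensions are then glued along the seams using holomorphic convexity of the preimages of the cutting arcs and the surjectivity of the restriction map $H^1(\mathfrak M(H^\infty(\cdot)),\Z)\rightarrow H^1(K,\Z)$ from \cite{Br5} together with the Arens--Royden theorem. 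This is where cohomology genuinely enters --- as the obstruction to extending $\Co^*$-valued functions, not as an obstruction to an invariant lift. Finally, the uniform constant in Theorem \ref{te1} does not come ``for free'': it is extracted from the qualitative statement by a compactness device --- take a minimizing sequence $(S_i',f_i,g_i)$, note that $\sqcup_i S_i'$ is again an unbranched covering of $S$, apply the qualitative result there once, and restrict to each $S_i'$. Your proposal contains neither the gluing argument nor this uniformization trick, and these are the two actual pillars of the proof.
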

By the corona theorem for $H^\infty(S')$ (see \cite[Cor.\,1.6]{Br2})
condition \eqref{treil2} is satisfied if and only if $(f,g)\in U_2(H^\infty(S'))$. Hence, Theorem \ref{te1} implies 
\begin{Th}\label{cor1}
$H^\infty(S')$ is a $B$-ring.
\end{Th}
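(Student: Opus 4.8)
The plan is to derive Theorem \ref{cor1} directly from Theorem \ref{te1} together with the corona theorem for $H^\infty(S')$. By the very definition of the stable rank, ${\rm sr}(H^\infty(S'))\le 1$ is equivalent to the assertion that every element of $U_2(H^\infty(S'))$ is reducible; and ${\rm sr}(H^\infty(S'))\ge 1$ holds automatically because $H^\infty(S')$ is a nonzero ring, so $U_0(H^\infty(S'))=\emptyset$ and no element of $U_1(H^\infty(S'))$ can be reduced. Hence it suffices to show that an arbitrary pair $(f,g)\in U_2(H^\infty(S'))$ is reducible.

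Fix such a pair. Since $(f,g)$ is unimodular, $f$ and $g$ are not both identically zero, so $M:=\max\{\|f\|_{H^\infty(S')},\|g\|_{H^\infty(S')}\}$ is a positive real number. Reducibility of $(f,g)$ is unaffected by replacing it with $(f/M,g/M)$: if $c_1\in H^\infty(S')$ makes $(f/M)+c_1(g/M)$ invertible in $H^\infty(S')$, then $f+c_1 g=M\bigl((f/M)+c_1(g/M)\bigr)$ is a product of invertible elements, hence invertible. So we may assume $\|f\|_{H^\infty(S')}\le 1$ and $\|g\|_{H^\infty(S')}\le 1$. By the corona theorem for $H^\infty(S')$ (\cite[Cor.\,1.6]{Br2}), the membership $(f,g)\in U_2(H^\infty(S'))$ yields $\inf_{z\in S'}(|f(z)|+|g(z)|)=:\delta>0$, so Theorem \ref{te1} applies and produces $G\in H^\infty(S')$ for which $\Phi=f+gG$ is invertible in $H^\infty(S')$, i.e. $f+gG\in U_1(H^\infty(S'))$. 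This is exactly the statement that $(f,g)$ is reducible, with $c_1=G$. Since $(f,g)$ was arbitrary, every element of $U_2(H^\infty(S'))$ is reducible, so ${\rm sr}(H^\infty(S'))=1$ and $H^\infty(S')$ is a $B$-ring.

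I do not expect any genuine obstacle in this deduction: all of the substance of Theorem \ref{cor1} is already packed into Theorem \ref{te1} and into the corona theorem for $H^\infty(S')$. The only items requiring a moment's care are the trivial normalization step needed to place the pair within the hypotheses of Theorem \ref{te1}, and the (standard, but worth recording) fact that for a nonzero ring the stable rank is at least one, so that the reducibility of all of $U_2(H^\infty(S'))$ indeed gives stable rank exactly equal to one. The quantitative constant $C$ furnished by Theorem \ref{te1} is irrelevant for the qualitative conclusion sought here.
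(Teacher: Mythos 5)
Your deduction of Theorem \ref{cor1} from Theorem \ref{te1} is sound as an implication: the normalization, the use of the corona theorem to convert unimodularity into the condition $\inf_{S'}(|f|+|g|)>0$, and the remark that the stable rank of a nonzero ring is at least one are all correct. The problem is that in this paper the implication runs the wrong way to serve as a proof. The logical order is the reverse of what you assumed: Theorem \ref{cor1} is proved first (Section 3), and Theorem \ref{te1} is then \emph{derived from} Theorem \ref{cor1} in Section 4 --- the first step of the paper's proof of Theorem \ref{te1} reads ``Due to Theorem \ref{cor1} there exists a function $G\in H^\infty(S')$ such that $f+gG$ is invertible,'' and the rest of that proof is only a disjoint-union compactness argument making the bound $C$ uniform in $f$, $g$ and $S'$. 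So your argument is circular: it invokes a theorem whose proof presupposes the statement being proved. The sentence in the introduction, ``Hence, Theorem \ref{te1} implies [Theorem \ref{cor1}],'' records the trivial direction of the equivalence, not the route the paper actually takes.

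The substance you are missing is the whole of Section 3. There the $B$-ring property is established via the Corach--Suarez criterion: $H^\infty(S')$ has stable rank one if and only if, for every hull $Z\subset\mathfrak M(H^\infty(S'))$, every $g\in C(Z,\Co^*)$ extends to an element of $C(\mathfrak M(H^\infty(S')),\Co^*)$. This is verified by covering $\bar S$ with closed disks $U_i$ as in Lemma \ref{lem2.5}, observing that each $W_i'=r^{-1}(V_i\cap S)$ is biholomorphic to $\Di\times F$ so that Treil's Theorem A (applied through the structure of $\mathfrak M(H^\infty(\Di\times\N))$) supplies local nonvanishing extensions over $\hat r^{-1}(U_i)$, and then gluing these local extensions inductively along the ordered cover using the holomorphic-convexity Lemmas \ref{lem2.3}, \ref{lem2.4} and \ref{lem3.1}. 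None of that content is reproduced or replaced by your proposal, so the proof as written has a genuine gap.
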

\begin{R}
{\rm It is known that every $B$-ring is {\em Hermite} (see, e.g., \cite[Thm.\,2.7]{V2}),
 i.e., any finitely generated stably free right module over the ring is free (equivalently, any rectangular left-invertible matrix over the ring can be extended to an invertible matrix). 
Let $J\subset H^\infty(S')$ be a closed ideal and $H_J^\infty:=\{c+f\, :\, c\in\Co,\ f\in J\}$ be the unital closed subalgebra generated by $J$. Then Corollary \ref{cor1} implies that $H_J^\infty$ is a $B$-ring (see, e.g., \cite[Thm.\,4]{V1}); hence, it is Hermite. 
This gives a generalization of \cite[Thm.\,1.1]{Br3} proved by a different method.
}
\end{R}
Let $M_n(H^\infty(S'))$ be the algebra of $n\times n$ matrices with entries in $H^\infty(S')$ regarded as the subspace of bounded linear operators on $(H^\infty(S'))^n$ equipped with the operator norm.
We use Theorem \ref{te1} to describe the structure of the group $SL_n(H^\infty(S'))\subset M_n(H^\infty(S'))$ of matrices with determinant $1$. 

Recall that a matrix in  $SL_n(H^\infty(S'))$ is {\em elementary} if it differs from the identity matrix by at most one non-diagonal entry.
\begin{Th}\label{te2}
Every matrix in $SL_n(H^\infty(S'))$ of norm $\le M$
 is a product of at most $(n -1)(\frac{3n}{2} + 1)$ elementary
matrices whose norms are bounded from above by a constant depending only on $M$, $n$ and $S$.
\end{Th}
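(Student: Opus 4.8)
The plan is to deduce Theorem~\ref{te2} from Theorem~\ref{te1} by a quantitative elementary reduction of matrices, carried out by induction on $n$; throughout, ``controlled'' abbreviates ``bounded above by a constant depending only on $M$, $n$ and $S$''. The basic observation is that a matrix $A=(a_{ij})\in SL_n(H^\infty(S'))$ with $\|A\|\le M$ has every row and every column unimodular with a \emph{controlled} corona constant: expanding $\det A=1$ along the first column gives $\sum_i(-1)^{i+1}A_{i1}\,a_{i1}=1$ with $\|A_{i1}\|_{H^\infty(S')}\le (n-1)!\,M^{n-1}$, so $\inf_{z\in S'}\sum_i|a_{i1}(z)|\ge\bigl((n-1)!\,M^{n-1}\bigr)^{-1}$. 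Since every matrix occurring along the way will again have controlled norm, the same lower bound for the relevant corona constants is available at every stage, and this is exactly what keeps the constant in Theorem~\ref{te2} uniform.

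I would first treat the base case $n=2$. Given $A=\left(\begin{smallmatrix}a&b\\ c&d\end{smallmatrix}\right)$ with $ad-bc=1$ and $\|A\|\le M$, the pair $(c,d)$ is unimodular with corona constant $\ge 1/M$; rescaling and applying Theorem~\ref{te1} yields $G$ with $c+dG$ invertible and $\|G\|$, $\|(c+dG)^{-1}\|$ controlled. Writing $E_{ij}(t)$ for the elementary matrix with entry $t$ in position $(i,j)$, set $w:=-G$ and $y:=c-dw=c+dG$ (invertible), then $z:=y^{-1}(d-1)$ and $x:=b-(a-bw)z$. A direct computation --- whose last step uses exactly the relation $ad-bc=1$ --- gives $A=E_{12}(x)\,E_{21}(y)\,E_{12}(z)\,E_{21}(w)$, a product of $4=(2-1)\bigl(\tfrac{3\cdot2}{2}+1\bigr)$ elementary matrices of controlled norm.

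For the inductive step the goal is to transform $A\in SL_n(H^\infty(S'))$, by left and right multiplication with elementary matrices, into a diagonal matrix $\operatorname{diag}(\Phi_1,\dots,\Phi_n)$ with invertible entries and $\Phi_1\cdots\Phi_n=1$, and then to factor that diagonal matrix into elementary ones via the Whitehead-type identity $\operatorname{diag}(u,u^{-1})=E_{12}(u-1)\,E_{21}(1)\,E_{12}(u^{-1}-1)\,E_{21}(-u)$ (the specialisation of the $n=2$ construction to $b=c=0$). To create the pivot at the top-left corner of the current block, one applies Theorem~\ref{te1} to the unimodular pair $(a_{11},g)$ with $g:=\sum_{i\ge2}b_i a_{i1}$, the $b_i$ being the cofactor coefficients of the first column; this produces $G$ with $\Phi:=a_{11}+gG$ invertible, and the ``sweep'' $R_1\mapsto R_1+(Gb_i)R_i$ (a product of at most $n-1$ elementary matrices) installs $\Phi$ in position $(1,1)$. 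Two further sweeps clear the rest of the first column (using $R_i\mapsto R_i-a_{i1}\Phi^{-1}R_1$) and the rest of the first row, decoupling $\Phi$ and leaving a smaller block. The essential economy is that the pivots are \emph{not} normalised to $1$ as the reduction proceeds, all normalisation being postponed to the terminal diagonal factorisation; with a suitable organisation of the sweeps (processing pairs of rows and columns together) one checks that $\tfrac{3n}{2}+1$ sweeps, each of at most $n-1$ elementary matrices, suffice, which yields the bound $(n-1)\bigl(\tfrac{3n}{2}+1\bigr)$. At each invocation of Theorem~\ref{te1} the corona constant of the relevant pair is bounded below purely in terms of $M$, $n$, $S$ (because the intermediate matrices have controlled norms), so all the elementary factors produced have controlled norm, and the final constant is the maximum of the finitely many bounds that occur.

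The main difficulty will be the quantitative bookkeeping: one must organise the row and column operations tightly enough --- batching them into sweeps and deferring every pivot normalisation --- to land exactly within the stated bound $(n-1)(\tfrac{3n}{2}+1)$, and, more importantly, verify that the corona constants feeding the repeated applications of Theorem~\ref{te1} never degenerate, so that \emph{all} the elementary factors have norm bounded by a single constant depending only on $M$, $n$ and $S$. By contrast, the passage from the disk to the covering surface $S'$ introduces nothing new at this stage: it is entirely absorbed into Theorem~\ref{te1}.
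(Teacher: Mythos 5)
Your strategy is genuinely different from the paper's, and its weakest point is exactly the one you flag yourself: the count. Your own bookkeeping, done honestly, gives $(n-1)$ stages each costing $3(n-k)$ elementary matrices (pivot sweep, column clear, row clear), i.e.\ $\tfrac{3n(n-1)}{2}$ in total, plus $4(n-1)$ for the terminal Whitehead factorisations of $\mathrm{diag}(\Phi_1,\dots,\Phi_n)$ --- that is $(n-1)\bigl(\tfrac{3n}{2}+4\bigr)$, not $(n-1)\bigl(\tfrac{3n}{2}+1\bigr)$. The assertion that ``a suitable organisation of the sweeps'' saves the missing $3(n-1)$ factors is precisely the combinatorial content of Lemma~9 and Remark~10 of Dennis--Vaserstein (their decomposition is essentially three full triangular unipotent factors, each worth $\tfrac{n(n-1)}{2}$ elementary matrices, plus $n-1$ extra ones), and you neither reproduce that argument nor verify that it survives your norm tracking. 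Your base case $n=2$ is correct and does achieve $4$ factors, and your mechanism for norm control --- cofactor expansion of $\det=1$ gives controlled corona constants for the pair $(a_{11},\sum_{i\ge 2}b_ia_{i1})$ at every stage, so Theorem~\ref{te1} applies with a $\delta$ depending only on $M,n,S$ --- is sound; but as written the proof establishes only a weaker bound on the number of factors.

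It is worth contrasting this with how the paper proceeds, because the difference is instructive. The paper never tracks constants through an elimination. It first proves the purely qualitative statement: $SL_n(H^\infty(S'))$ is connected (via $\dim\mathfrak M(H^\infty(S'))=2$, Browder, Hu and Arens), hence every matrix is a finite product of elementary matrices, and then the Dennis--Vaserstein lemma for $B$-rings is cited verbatim for the bound $(n-1)(\tfrac{3n}{2}+1)$. The uniform norm bound is then obtained for free by a compactness trick special to this setting: if matrices $F_i$ on coverings $S_i'$ witnessed the failure of a uniform bound, one forms the disjoint union $S'=\sqcup_i S_i'$, which is again an unbranched covering of $S$, factors the single matrix $F$ with $F|_{S_i'}=F_i$, and restricts the factors to each $S_i'$. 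This is the same device used to deduce Theorem~\ref{te1} from Theorem~\ref{cor1}, and it removes all of the ``quantitative bookkeeping'' you identify as the main difficulty --- including the need to requantify Dennis--Vaserstein. Your closing remark that the covering structure ``introduces nothing new at this stage'' is therefore exactly backwards: the closure of the class of coverings under disjoint unions is the engine of the uniform estimate.
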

This result is new even for matrices with entries in $H^\infty$.

The proof of Theorem \ref{te1} is based on Theorem A and some results of the author presented in \cite{Br4} and \cite{Br5} along with some topological results.
In the next section we collect some results required for the proof of Theorem \ref{te1}.
The proof  is given in Section~4.

\sect{Auxiliary Results}
\noindent {\bf 2.1.} Let $\mathfrak M(A)$ denote the maximal ideal space of a commutative complex unital Banach algebra $A$, i.e., the set of nonzero homomorphisms $A \rightarrow\Co$ equipped with the {\em Gelfand topology}. In this part we present some facts about the maximal ideal space $\mathfrak M(H^\infty(S'))$, where $r:S'\rightarrow S$ is a (not necessarily connected but second-countable) unbranched covering of a bordered Riemann $S$, see \cite[Sect.\,2]{Br3}, \cite[Sect.\,4]{Br4} for details. \smallskip

Recall that $H^\infty(S')$ separates points of $S'$ and the map $\iota: S'\rightarrow \mathfrak M(H^\infty(S'))$ sending $x\in S'$ to the evaluation functional $\delta_x\in (H^\infty(S'))^*$ at $x$
 embeds $S'$ into $\mathfrak M(H^\infty(S'))$ as an open dense subset (-- the corona theorem for $H^\infty(S')$). \smallskip

The covering $r: S'\rightarrow S$  can be viewed as a fiber bundle over $S$ with a discrete (at most countable) fiber $F$. Let $E(S,\beta F)$ be the space obtained from $S'$ by taking the Stone-\v{C}ech compactifications of fibres under $r$.  It is a normal Hausdorff space  and $r$ extends to a continuous map $r_E: E(S,\beta F)\rightarrow S$ such that $\bigl(E(S,\beta F),S,r_E,\beta F\bigr)$ is a fibre bundle on $S$ with fibre $\beta F$ and $S'$ is an open dense subbundle
of $E(S,\beta F)$. Each  $f\in H^\infty(S')$ admits an extension $\hat f\in C(E(S,\beta F))$ and the algebra formed by such extensions separates points of $E(S,\beta F)$. Thus $\iota$ extends to a continuous injection $\hat{\iota}: E(S,\beta F)\rightarrow \mathfrak M(H^\infty(S'))$, $(\hat{\iota}(\xi))(f):=\hat f(\xi)$.\smallskip

In what follows, we identify $E(S,\beta F)$ with its image under $\hat\iota$. Also, for $K\subset S$ we set $K':=r^{-1}(K)$, $K_E:=r_E^{-1}(K)$ and for a subset $U$ of a topological space we denote
by $\mathring U$, $\bar U$ and $\partial U$ its interior, closure and boundary.\smallskip

It is well known that $S$ can be regarded as a domain in a compact Riemann surface $R$ such that
 $R\setminus \bar S$ is the finite disjoint union of open disks with analytic boundaries.
 Let  $A(S)\subset H^\infty(S)$ be the subalgebra of functions continuous up to the boundary.
 We denote by $\hat r:\mathfrak M(H^\infty(S'))\rightarrow\bar S$  the continuous surjective map induced by the transpose of the homomorphism
$A(S)\rightarrow H^\infty(S')$, $f\mapsto f\circ r$. Then  $E(S,\beta F)$ coincides
with the open set $\hat r^{-1}(S)$ and  $\hat r|_{E(S,\beta F)}=r_E$.
\smallskip

Let $U\subset R$ be open such that $V:=U\cap \bar S\neq\emptyset$.
Then $\hat r^{-1}(V)$ is an open subset of  $\mathfrak M(H^\infty(S'))$ and 
due to the corona theorem $\mathring{V}':=r^{-1}(\mathring V)$, $\mathring V:=U\cap S$, is an open dense subset  of $\hat r^{-1}(V)$.
\begin{Prop}\label{prop2.1}
Each $f\in H^\infty(\mathring{V}')$ admits an extension  $\hat f\in C(\hat r^{-1}(V))$.
\end{Prop}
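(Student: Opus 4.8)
The plan is to construct $\hat f$ locally over $\hat r^{-1}(V)$ and patch. Since $\mathring{V}'$ is dense in the open set $\hat r^{-1}(V)$ (the corona theorem), any continuous extension of $f$ is unique; hence it suffices to prove the statement for all $U$ in a basis of the topology of $R$, as one then covers the given $V$ by basic pieces, extends over each, and glues (overlaps agree by uniqueness). So take $U$ a small coordinate disk in $R$. Either (a) $\bar U\subset S$, or (b) $U\cap\partial S\neq\emptyset$; in case (b), $U$ being small and $\partial S$ a finite union of analytic curves, $\gamma:=U\cap\partial S$ is a single analytic arc, and in a suitable holomorphic chart $V=U\cap\bar S$ becomes $\{z\in\Di:\,{\rm Im}\,z\ge 0\}$ with $\gamma$ the diameter $(-1,1)$ and $\mathring V=U\cap S$ the open upper half-disk. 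In either case $\mathring V$ is simply connected, so the covering $r^{-1}(\mathring V)\to\mathring V$ is trivial; a trivialization identifies $\mathring{V}'$ with $\mathring V\times F$ and $f$ with a family $(f_s)_{s\in F}$, $f_s\in H^\infty(\mathring V)$, $\sup_s\|f_s\|_{H^\infty(\mathring V)}\le\|f\|_{H^\infty(\mathring{V}')}$ --- equivalently, with a bounded holomorphic map $\mathring V\to\ell^\infty(F)=C(\beta F)$, holomorphy into $C(\beta F)$ being a consequence of the Cauchy estimates (the difference quotients of the $f_s$ converge uniformly in $s$).

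\emph{Interior case.} Then $\hat r^{-1}(V)=r_E^{-1}(U)$ is the part of $E(S,\beta F)$ over the simply connected $U$, which the same trivialization identifies with $U\times\beta F$. For fixed $z\in U$ the bounded function $s\mapsto f_s(z)$ on $F$ extends uniquely to $\hat f(z,\cdot)\in C(\beta F)$; this defines $\hat f$ on $U\times\beta F$, extending $f$. By the Cauchy estimates $(f_s)$ is locally equi-Lipschitz, so for $z$ near a fixed $z_0$ one gets $|\hat f(z,\xi)-\hat f(z_0,\xi)|\le L_{z_0}|z-z_0|$ uniformly in $\xi\in\beta F$ (a bound valid for all $s\in F$ passes to $\beta F$); together with the continuity of $\hat f(z_0,\cdot)$ this yields joint continuity, so $\hat f\in C(U\times\beta F)=C(\hat r^{-1}(V))$. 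The same argument also produces $\hat f$ over $\hat r^{-1}(\mathring V)$ in case (b).

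\emph{Boundary case.} This is the heart of the matter: unlike above, the restriction of $f$ to a sheet of $\mathring{V}'\cong\mathring V\times F$ is merely a bounded holomorphic function on the half-disk and need not extend continuously to the arc $\gamma$, so one must instead use the structure of $\mathfrak M(H^\infty(S'))$ over $\partial S$. Since $\gamma$ is analytic, $\mathring V$ is one side of $\gamma$ inside a simply connected domain $\hat V\subset R$ containing $\gamma$, over which the (trivial) covering extends. The plan is to combine the corona theorem for $H^\infty(S')$ with the description of $\mathfrak M(H^\infty(S'))$ near $\partial S$ from \cite{Br3, Br4} to show that the restriction homomorphism $\rho\colon H^\infty(S')\to H^\infty(\mathring{V}')$ carries the part of $\mathfrak M(H^\infty(\mathring{V}'))$ lying over $V$ (under the analogue for $\mathring{V}'$ of $\hat r$) homeomorphically onto $\hat r^{-1}(V)$. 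Granting this, $\hat f$ is simply the Gelfand transform of $f$ in the commutative Banach algebra $H^\infty(\mathring{V}')$, restricted to that part and transported to $\hat r^{-1}(V)$: it is continuous by construction and agrees with $f$ on $\mathring{V}'$. (An equivalent, more hands-on route is to show, by a bounded $\bar\partial$-patching on $S'$ up to the analytic boundary of the kind available from \cite{Br5}, that near each point of $\gamma$ the function $f$ coincides on $\mathring{V}'$ with the restriction of some global $G\in H^\infty(S')$; the extensions $\hat G\in C(\mathfrak M(H^\infty(S')))$ then patch to $\hat f$ by uniqueness.)

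The main obstacle is precisely this identification: one must verify that over the analytic arc $\gamma\subset\partial S$ the ``boundary-fibre'' part of $\mathfrak M(H^\infty(S'))$ is exactly the one contributed by $H^\infty(\mathring{V}')$ --- i.e. that the restrictions of global functions separate the relevant homomorphisms --- while simultaneously controlling the $\beta F$-direction, for which the equicontinuity argument of the interior case is unavailable, a bounded family in $H^\infty(\Di)$ being in general not equicontinuous on $\mathfrak M(H^\infty(\Di))$. The reduction to coordinate disks, the gluing, and the interior case are routine.
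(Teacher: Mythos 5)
Your interior case is correct and is essentially the paper's case (1): over a simply connected piece the covering trivializes, $f$ becomes a bounded family $(f_s)_{s\in F}$, and the Cauchy-estimate equicontinuity argument gives a continuous extension to $O\times\beta F$. This is exactly the content of the result the paper quotes here (\cite[Lm.\,3.1]{Br1}), so that half is fine, and your uniqueness-and-gluing reduction to basic open sets is also sound.

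The boundary case, however, is a genuine gap, and you say so yourself: you reduce the proposition to the assertion that the part of $\mathfrak M(H^\infty(\mathring{V}'))$ lying over $V$ is carried homeomorphically onto $\hat r^{-1}(V)$ by the transpose of the restriction map, and then flag verifying this as ``the main obstacle'' without resolving it. That assertion is not an input one can take for granted -- in the paper it appears as Remark \ref{rem2.2}, which is \emph{deduced from} Proposition \ref{prop2.1} (the extension property is what shows the map $s_V$ is onto $\hat r^{-1}(\bar V)$ and injective over $V$), so your plan as written runs the implication backwards. The actual work the paper does at a boundary point is different: it passes to an annular collar $A_\gamma\cong\{c<|z|\le 1\}$ of the boundary component $\gamma$, picks a polar-coordinate rectangle $\Pi\subset V\cap A$ with one side on the circle and containing $\hat r(\xi)$, and invokes the argument of \cite[Prop.\,4.2]{Br4}, which shows directly that every function in $H^\infty(\mathring{\Pi}')$ extends continuously to $\hat r^{-1}(\Pi)$. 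Your parenthetical ``hands-on route'' (locally matching $f$ with a global $G\in H^\infty(S')$ via bounded $\bar\partial$-corrections up to the analytic boundary) is closer in spirit to what such an argument must do, but it too is only a sketch; none of the boundary-fibre analysis is actually carried out, and that is precisely the nontrivial content of the proposition.
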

\begin{proof}
We reduce the statement to some known results proved earlier by the author.\smallskip

 We have to extend $f$ continuously to each point $\xi\in \hat r^{-1}(V)$.
The set $\hat r^{-1}(V)$ is the disjoint union of the open set $\mathring V_E=\hat r^{-1}(\mathring V)$ and the set $\hat r^{-1}(V\cap\partial S)$. So we consider two cases.\smallskip

\noindent (1)\quad $\xi\in \hat r^{-1}(\mathring V)$. \smallskip

 Let $O\subset\mathring{V}$ be an open simply connected neighbourhood of $\hat r(\xi)$. By the definition of the bundle $E(S,\beta F)$, the set  $O_E= r_E^{-1}(O)$ is homeomorphic to 
$O\times\beta F$ and this homeomorphism maps $O'=r^{-1}(O)$ biholomorphically onto $O\times F$. Then Lemma 3.1 of \cite{Br1} implies that $f|_{O'}\in H^\infty(O')$ admits an extension $\hat f\in C(O_E)$ as required (because $O_E$ is an open neighbourhood of $\xi$).  \smallskip

\noindent (2)\quad $\xi\in\hat r^{-1}(V\cap\partial S)$.\smallskip

Let $\hat r(\xi)$ belong to a connected component $\gamma$ of $\partial S$.
By the definition of $S$, there are a relatively open neighbourhood $A_\gamma\subset \bar S$ of   $\gamma$  and  a homeomorphic map $A_\gamma\rightarrow A:=\{z\in\Co\, :\, c<|z|\le 1\}$, $c>0$, which maps $\gamma$ onto the unit circle $\mathbb S\subset\Co$ and is holomorphic on $\mathring A_\gamma$.  Without loss of generality we identify $A_\gamma$ with $A$ and $\gamma$ with $\mathbb S$. Then since $V\cap A\ne\emptyset$, there is a relatively open subset $\Pi\subset V\cap A$ which is a rectangle in polar coordinates with one side of the boundary on $\mathbb S$ such that $\hat r(\xi)\in \Pi$. Repeating literally the arguments of the proof of \cite[Prop.\,4.2]{Br4}, we obtain that each function from $H^\infty(\mathring{\Pi}')$ admits a continuous extension to $\hat r^{-1}(\Pi)$. Since the latter is an open neighbourhood of $\xi$, this gives the required extension of $f$ to $\xi$. We leave the details to the readers.
\end{proof}
\begin{R}\label{rem2.2}
{\rm Since $\mathring{V}'$ is dense in $\hat r^{-1}(V)$, the above extension  preserves supremum norm. Then the transpose of the restriction homomorphism $H^\infty(S')\rightarrow H^\infty(\mathring{V}')$, $f\mapsto f|_{\mathring{V}'}$,  induces a continuous map $s_V: \mathfrak M(H^\infty(\mathring{V}'))\rightarrow \mathfrak M(H^\infty(S'))$ with image $\hat r ^{-1}(\bar V)$ one-to-one on $s_V^{-1}(\hat r ^{-1}(V))$.}
\end{R}
\noindent {\bf 2.2.} 
A compact subset $K\subset\mathfrak M(H^\infty(S'))$ is said to be {\em holomorphically convex} (with respect to the algebra $H^\infty(S')$) if for every $\xi\not\in K$ there is $f\in H^\infty(S')$ such that 
\[
\max_K |\hat f|< |\hat f(\xi)|;
\]
here $\hat{f}\in C(\mathfrak M(H^\infty(S'))$ is the Gelfand transform of $f$.

A holomorphically convex subset $Z\subset\mathfrak M(H^\infty(S'))$ is called
a {\em hull} if there is a proper ideal $I\subset  H^\infty(S')$ such that
\[
Z=\{\xi\in \mathfrak M(H^\infty(S')\, :\, \hat f(\xi)=0\quad \forall f\in I\}.
\]

The algebra $H^\infty(S')$ is a $B$-ring if and only if for every hull
$Z\subset \mathfrak M(H^\infty(S')$ the map $C(\mathfrak M(H^\infty(S')),\Co^*) \rightarrow C(Z, \Co^*)$, $\Co^*:=\Co\setminus\{0\}$, induced by restriction
to $Z$ is onto, see \cite{CS}.
\smallskip

In the next two lemmas, $S=\Di$ and $S'=S\times\N$ (- the countable disjoint union of open unit disks).

\begin{Lm}\label{lem2.3}
If $K\subset\mathfrak M(H^\infty(S'))$ is holomorphically convex, then for every $g\in C(K,\Co^*)$, there exists $\tilde g\in C(\mathfrak M(H^\infty(S')),\Co^*)$ such that
$\tilde g|_K=g$.
\end{Lm}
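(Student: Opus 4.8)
The plan is the following. Set $X:=\mathfrak M(H^\infty(S'))$; it is compact Hausdorff and $K$ is closed in it. \emph{First reduce to cohomology.} On a compact Hausdorff space $Y$ the exponential sheaf sequence $0\to\underline{\Z}\to\underline{C}\xrightarrow{\exp}\underline{C^{*}}\to 0$ (the sheaf $\underline{C}$ of germs of continuous complex functions being soft) gives a natural isomorphism $C(Y,\Co^{*})/\exp C(Y)\cong\check H^{1}(Y,\Z)$ compatible with restriction to closed subsets, and the Tietze extension theorem makes $\exp C(X)\to\exp C(K)$ surjective. Hence the assertion of the lemma is equivalent to surjectivity of the restriction map $\check H^{1}(X,\Z)\to\check H^{1}(K,\Z)$; equivalently, since $\check H^{2}(X,\Z)=0$ (a known property of $\mathfrak M(H^\infty(S'))$, cf. \cite{Br2}--\cite{Br4}), to the vanishing of $\check H^{2}(X,K;\Z)$.

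\emph{The case of a hull.} Because $S=\Di$ and $S'=\Di\times\N$, we have $H^\infty(S')=\ell^{\infty}(\N,H^\infty)$. If $(f,g)=((f_{n})_{n},(g_{n})_{n})\in U_{2}(H^\infty(S'))$, then $\inf_{n,z}(|f_{n}(z)|+|g_{n}(z)|)\ge\delta>0$; normalizing $\|f\|,\|g\|\le 1$ and applying Theorem~A in each coordinate (its constant depending only on $\delta$) we obtain $G_{n}$ with $f_{n}+g_{n}G_{n}$ invertible and $\max\{\|G_{n}\|,\|(f_{n}+g_{n}G_{n})^{-1}\|\}\le C(\delta)$, so $G:=(G_{n})_{n}\in H^\infty(S')$ reduces $(f,g)$. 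Thus $H^\infty(S')$ is a $B$-ring, and by the Corach--Su\'arez criterion \cite{CS} the restriction $\check H^{1}(X,\Z)\to\check H^{1}(Z,\Z)$ is surjective for \emph{every hull} $Z\subseteq X$.

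\emph{From hulls to holomorphically convex $K$.} Since $K$ is holomorphically convex, $K=\mathfrak M(B)$ for the uniform algebra $B:=\overline{\{\hat f|_{K}:f\in H^\infty(S')\}}\subseteq C(K)$; hence, by the Arens--Royden theorem, every class of $\check H^{1}(K,\Z)$ is represented by $\hat a|_{K}$ for some $a\in H^\infty(S')$ with $\inf_{K}|\hat a|>0$. It therefore suffices to find $\widetilde a\in(H^\infty(S'))^{-1}$ such that $\hat{\widetilde a}|_{K}$ is homotopic in $C(K,\Co^{*})$ to $\hat a|_{K}$: then $[\hat{\widetilde a}|_{K}]$ lies in the image of $\check H^{1}(X,\Z)\to\check H^{1}(K,\Z)$, and the first step completes the proof. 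To build $\widetilde a$, observe that $L:=\{\xi\in X:|\hat a(\xi)|\le\tfrac12\inf_{K}|\hat a|\}$ is compact and disjoint from $K$; using holomorphic convexity of $K$ one produces $b\in H^\infty(S')$ that is bounded below on $L$ yet so small on $K$ that, after normalizing $\|\hat a\|_{X},\|\hat b\|_{X}\le 1$, the pair $(a,b)$ is unimodular and the reduction $\widetilde a:=a+bG$ furnished by the previous step satisfies $\|(\hat{\widetilde a}-\hat a)|_{K}\|<\inf_{K}|\hat a|$, so that $\hat{\widetilde a}|_{K}$ and $\hat a|_{K}$ are joined by the straight-line homotopy through $C(K,\Co^{*})$.

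\emph{Main obstacle.} The heart of the argument, and the part requiring the most care, is the construction of $b$ with the right quantitative profile: uniformly bounded below on $L$, small on $K$ relative to the constant $C(\delta)$ governing the reduction, and of controlled supremum norm so that Theorem~A applies after normalization. Holomorphic convexity of $K$ is used exactly here — it furnishes, for each $\xi\in L\setminus K$, an element of $H^\infty(S')$ dominating its own restriction to $K$ by a prescribed factor, and by compactness of $L$ finitely many suffice — but combining them without destructive cancellation while keeping the norms under control is delicate; I would carry it out with the function-theoretic and topological results of \cite{Br4} and \cite{Br5}. (Equivalently, one may phrase the whole reduction as the vanishing $\check H^{2}(X,K;\Z)=0$, deduced from $\check H^{2}(X,\Z)=0$ together with the fibre-bundle description of $X$ over $\bar S$ recalled in Section~2.1.)
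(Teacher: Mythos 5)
Your first reduction (exponential sequence / Arens--Royden plus Tietze, so that the lemma is equivalent to surjectivity of $\check H^{1}(\mathfrak M(H^\infty(S')),\Z)\to \check H^{1}(K,\Z)$) is exactly the paper's, and your observation that the hull case follows from the $B$-ring property of $\ell^{\infty}(\N,H^\infty)$ via \cite{CS} is correct. But the lemma is stated for an arbitrary holomorphically convex $K$, not for a hull, and that is precisely where your argument has a genuine gap. The paper obtains the surjectivity of $\check H^{1}(\mathfrak M(H^\infty(S')),\Z)\to \check H^{1}(K,\Z)$ for holomorphically convex $K$ by citing a specific prior result, \cite[Lm.\,5.3]{Br5}; that citation carries the entire content of the lemma. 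You instead try to reduce the holomorphically convex case to the hull case by constructing $b\in H^\infty(S')$ that is uniformly bounded below on $L=\{|\hat a|\le\tfrac12\inf_K|\hat a|\}$ and small on $K$, and you explicitly leave that construction unproved, deferring to ``the function-theoretic and topological results of \cite{Br4} and \cite{Br5}.'' That is not a proof of the step, and the step is the whole difficulty.

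Concretely, two things go wrong. First, holomorphic convexity hands you one function per point $\xi\in L$ with $\max_K|\hat f_\xi|<|\hat f_\xi(\xi)|$; after covering $L$ by finitely many such neighbourhoods you have a tuple $(g_1,\dots,g_N)$ with $\max_j|\hat g_j|$ bounded below on $L$ and each $g_j$ small on $K$, but producing a \emph{single} $b$ bounded below on $L$ from this tuple is itself a stable-rank/corona-type problem (you would need to reduce the unimodular $(N+1)$-tuple $(a,g_1,\dots,g_N)$, which is not covered by the two-function Theorem A you invoke). Second, even granting a single $b$, the quantitative bookkeeping does not close: the constant $C(\delta)$ of Theorem A is governed by $\inf_L|\hat b|/\|b\|_{H^\infty(S')}$, while you need $\max_K|\hat b|\cdot C(\delta)<\inf_K|\hat a|$; the natural device of taking powers to shrink $b$ on $K$ simultaneously shrinks $\inf_L|\hat b|/\|b\|$ (since nothing forces $f_\xi$ to attain its sup norm near $\xi$), so $\delta$ degrades and $C(\delta)$ blows up. Your closing parenthetical is also not a way out: $\check H^{2}(X,\Z)=0$ together with exactness of the pair sequence shows surjectivity is \emph{equivalent} to $\check H^{2}(X,K;\Z)=0$, but the vanishing of the absolute group does not imply the vanishing of the relative one. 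To repair the proof you should either invoke \cite[Lm.\,5.3]{Br5} directly, as the paper does, or supply a complete construction of $b$ with the stated quantitative profile.
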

\begin{proof}
According to \cite[Lm.\,5.3]{Br5} the homomorphism of the \v{C}ech cohomology groups $H^1(\mathfrak M(H^\infty(S')),\Z) \rightarrow H^1(K, \Z)$
induced by the restriction map to $K$ is surjective. In turn, by the Arens-Royden theorem $H^1(K,\Z)$ and $H^1(\mathfrak M(H^\infty(S')), \Z)$ are connected components of topological groups $C(K,\Co^*)$ and $C(\mathfrak M(H^\infty(S')),\Co^*)$, respectively. Hence, for each $g\in C(K,\Co^*)$, there is $g_1\in C(\mathfrak M(H^\infty(S')),\Co^*)$ such that $g\cdot (g_1^{-1})|_K=e^h$ for some $h\in C(K)$. Let $\tilde h\in C(\mathfrak M(H^\infty(S'))$ be an extension of $h$ (existing by the Titze-Urysohn theorem). Then $\tilde g=g_1e^{\tilde h}$ is the required extension of $g$.
\end{proof}
\begin{Lm}\label{lem2.4}
Suppose $K\subset\mathfrak M(H^\infty(S'))$ is holomorphically convex and $Z\subset\mathfrak M(H^\infty(S'))$ is a hull. Then $K\cup Z\subset\mathfrak M(H^\infty(S'))$ is holomorphically convex.
\end{Lm}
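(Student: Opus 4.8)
The plan is to verify the defining property of holomorphic convexity for $K\cup Z$ directly, by constructing, for each point outside $K\cup Z$, a single function whose Gelfand transform peaks there relative to $K\cup Z$. First note that $K\cup Z$ is compact: $Z$ is compact because it is, in particular, holomorphically convex, so $K\cup Z$ is a legitimate set to which the definition applies. Fix then $\xi\in\mathfrak M(H^\infty(S'))\setminus(K\cup Z)$; the goal is to exhibit $h\in H^\infty(S')$ with $\max_{K\cup Z}|\hat h|<|\hat h(\xi)|$.

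The construction will use the two hypotheses asymmetrically. Since $\xi\notin Z$ and $Z$ is the hull of a proper ideal $I\subset H^\infty(S')$, there is by definition a $u\in I$ with $\hat u(\xi)\neq 0$; after rescaling I may assume $\hat u(\xi)=1$, and since every element of $I$ has Gelfand transform vanishing on $Z$, we have $\hat u\equiv 0$ on $Z$. Separately, since $\xi\notin K$ and $K$ is holomorphically convex, there is $v_0\in H^\infty(S')$ with $\max_K|\hat v_0|<|\hat v_0(\xi)|$; replacing $v_0$ by $v:=v_0/\hat v_0(\xi)$ gives $v\in H^\infty(S')$ with $\hat v(\xi)=1$ and $a:=\max_K|\hat v|<1$. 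I will then take $h:=u\,v^n$ for a large integer $n$. Because the Gelfand transform is an algebra homomorphism, $\hat h=\hat u\,\hat v^{\,n}$, hence $\hat h(\xi)=1$, $\hat h\equiv 0$ on $Z$, and on $K$ one has $|\hat h|\le(\max_K|\hat u|)\,a^n=:Ma^n$ with $M<\infty$ since $\hat u$ is continuous on the compact set $K$. Choosing $n$ so large that $Ma^n<1$ yields $\max_{K\cup Z}|\hat h|<1=|\hat h(\xi)|$, proving that $K\cup Z$ is holomorphically convex.

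The point requiring a moment of care — and the only genuine content of the argument — is exactly this asymmetry: holomorphic convexity of $K$ supplies, for a given point outside $K$, only one ``peaking'' function with no control of its behaviour on $Z$, whereas the hull structure of $Z$ supplies a function vanishing identically on $Z$; multiplying a high power of the former by the latter drives the value on $K$ below the nonzero value at $\xi$ while keeping the value on $Z$ equal to $0$. This is essential, since a union of two merely holomorphically convex compacta need not be holomorphically convex. I do not anticipate any serious obstacle; the proof does not even use the special form $S=\Di$, $S'=S\times\N$, and the only auxiliary remark needed is the compactness of the holomorphically convex set $Z$.
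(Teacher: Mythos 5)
Your proof is correct and is essentially identical to the paper's: the paper also takes $f$ peaking at $\xi$ relative to $K$, $g\in I$ vanishing on $Z$ with $\hat g(\xi)=1$, and uses $h=f^ng$ with $n$ large enough that $(\max_K|\hat f|)^n\max_K|\hat g|<1$. No differences worth noting.
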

\begin{proof}
Let $\xi\not\in K\cup Z$. By the hypothesis, there exist $f,g\in H^\infty(S')$ such that $\hat f(\xi)=\hat g(\xi)=1$ and
\[
\max_K |f|=:c<1,\qquad g|_Z=0.
\]
Let $M:=\max_K |g|$. We choose $n\in\N$ such that $c^nM<1$. Then for $h:=f^n g\in H^\infty(S')$ we have
\[
\max_K |\hat h|\le c^nM<1=|\hat h(\xi)|
\]
This shows that the set $K\cup Z$ is holomorphically convex.
\end{proof}

\noindent {\bf 2.3.} For the proof of Theorem \ref{te1} we require the following topological result.
\begin{Lm}\label{lem2.5}
There is a finite cover $(U_j)_{j=1}^m$ of $\bar{S}$ by compact subsets homeomorphic to $\bar{\Di}$ such that each $U_i$ is contained in an open simply connected set $V_i\subset R$ with simply connected intersection $V_i\cap S$, each $U_i$ intersects with at most  two other sets of the family and each non-void $U_i\cap U_j$ is homeomorphic to $I:=[0,1]$.
\end{Lm}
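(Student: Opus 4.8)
The plan is to construct the cover explicitly: put $\bar S$ into a combinatorial normal form, cut it into a ``chain'' of small closed disks, and then thicken each piece slightly inside $R$.

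\emph{Step 1 (normal form).} Recall that $\bar S$ is a compact orientable surface with non-empty real-analytic boundary, realized as a domain in the compact surface $R$ with $R\setminus\bar S$ a finite disjoint union of disks. Fix pairwise disjoint, properly embedded real-analytic arcs $\gamma_1,\dots,\gamma_k$ in $\bar S$ (endpoints on $\partial\bar S$) whose complement is connected and simply connected; equivalently, cutting $\bar S$ along them yields a closed disk $D$. Here $k=2g+b-1$, where $g$ is the genus of $R$ and $b$ the number of components of $\partial\bar S$, and one may take each $\gamma_i$ inside an open coordinate disk of $R$.

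\emph{Step 2 (chain of disks).} Slice $D$ into finitely many closed sub-disks $W_1,\dots,W_N$ linearly ordered so that consecutive ones meet in a single arc and non-consecutive ones are disjoint. Refine the slicing so that each of the $2k$ boundary arcs of $D$ arising from the cuts $\gamma_i$ is contained in a single $W_j$, and so that each $W_j$ meets $\partial\bar S$ in at most one arc. Re-gluing the $W_j$ according to the quotient map $D\to\bar S$ yields a finite cover $(U_i)_{i=1}^m$ of $\bar S$ by closed topological disks; by the fineness of the slicing every non-void $U_i\cap U_j$ is homeomorphic to $I$. Finally, shrink the coordinate disks of Step 1 to open simply connected sets $V_i\subset R$ with $U_i\subset V_i$ and $V_i$ meeting $\partial\bar S$ in at most one arc, so that $V_i\cap S$ is simply connected.

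\emph{Main obstacle.} The substantive point is to choose the cut arcs $\gamma_i$ and the slicing of $D$ so that, after regluing, every $U_i$ meets at most two other sets of the family while each overlap remains a single arc rather than a union of arcs. This is a finite combinatorial problem: one spreads the $2k$ cut arcs apart along $\partial D$ and threads the linear order of the $W_j$ through them compatibly with the cyclic order they induce on $\partial D$ — here orientability of $\bar S$ is what guarantees such a compatible threading exists. Once the combinatorics is arranged, the remaining checks — in particular that each $V_i$ can be kept simply connected with simply connected trace on $S$, which needs only that $V_i$ meet the analytic curve $\partial\bar S$ in at most one arc — are routine.
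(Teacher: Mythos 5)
The construction in Steps 1--2 is fine as far as it goes, but the paragraph you label ``Main obstacle'' is not a routine finite check that can be deferred --- it is the whole content of the lemma, and the ``compatible threading'' you appeal to does not exist in general. Count adjacencies: the chain $W_1,\dots,W_N$ contributes $N-1$ overlaps forming a path, and regluing along the $k=2g+b-1$ cut arcs adds $k$ further overlaps, one joining the slice containing $\gamma_i^{+}$ to the slice containing $\gamma_i^{-}$ (these must be distinct slices, else the image is not a disk). A path has only two vertices of degree $<2$, so for $k\ge 2$ (e.g.\ a one-holed torus, or any $S$ with three or more boundary components) the resulting adjacency graph either has a vertex of degree $\ge 3$, violating ``at most two other sets'', or has two gluings joining the same pair of slices, in which case that $U_i\cap U_j$ is a disjoint union of two arcs, violating ``homeomorphic to $I$''. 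This is not a defect of your particular slicing, and orientability cannot rescue it: for any cover of $\bar S$ by closed disks in which every nonempty pairwise overlap is a single arc and every piece meets at most two others, the adjacency graph is a disjoint union of paths and cycles, and a Mayer--Vietoris count then forces $\chi(\bar S)\ge 0$; so the two conditions are simultaneously unattainable once $\chi(\bar S)=2-2g-b<0$, i.e.\ once $k\ge 2$.

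For comparison, the paper's proof is entirely different: it takes a spine $L\subset\bar S$ via Whitehead's theorem together with a strong deformation retraction of $\bar S$ onto $L$ whose fibers are arcs over edge-interiors and star trees over vertices, and defines $U_i$ as the closure of the fiber over the open edge $\mathring e_i$. What that construction actually delivers --- and what is actually used in the induction of Section~3 --- is the weaker property that each $U_i$ meets the union of the remaining pieces precisely in $\partial U_i^{\circ}=\overline{\partial U_i\setminus\partial S}$, i.e.\ in at most two disjoint arcs; an individual $U_i$ may still meet more than two other pieces (all three edges at a trivalent vertex of $L$ pairwise intersect), and an individual $U_i\cap U_j$ may consist of two arcs (edges of $L$ sharing both endpoints). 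If you want to make your cut-and-chain construction serve the purpose of the paper, you should aim at this weaker conclusion --- that each piece is attached to the rest of the cover along at most two disjoint arcs, each of which is nicely embedded in a simply connected $V_i$ --- rather than at the literal statement, whose conjunction of the ``at most two neighbours'' and ``single-arc overlap'' conditions is obstructed as above.
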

\begin{proof}
Since $\bar S$ is triangulable, we may regard it as a two dimensional polyhedral manifold. It follows from the Whitehead theorem \cite[Thm.\,(3.5)]{W} that there are a (finite) one-dimensional polyhedron  $L\subset\bar{S}$ with sets of edges $E_L$ and vertices $V_L$ and a piecewise linear strong deformation 
retraction $F: \bar S\times I\rightarrow S$ of $\bar S$ onto $L$ such that

(a) $F^{-1}(x,1)\subset\bar{S}$ is a connected polyhedron homeomorphic to a {\em star tree} with internal vertex $x$ of degree $2$ if either $x\in\mathring e$ for some  $e\in E_L$ or $x\in V_L$ is of degree $\le 2$ and of degree $>2$
if $x\in V_L$ is of degree $>2$, and this homeomorphism maps $F^{-1}(x,1)\cap\partial S$ onto the set of external points of the tree.

(b) If $e\in E_L$, then $F^{-1}(\mathring{e},1)\cap\partial S$ is the disjoint union of two sets homeomorphic to $I$.

Let $E_L:=\{e_1,\dots,e_m\}$.
We define
\[
U_i:=\overline{F^{-1}(\mathring{e_i},1)},\quad 1\le i\le m.
\]
Then every $U_i$ is a polyhedral submanifold of $\bar S$ homeomorphic to $\bar\Di$ with the boundary formed by some arcs in  $\partial S$ along with some subsets of $F^{-1}(v_{i_j},1)$, $j=1,2$, homeomorphic to $I$; here $v_{i_1},v_{i_2}\in V_L$ are endpoints of $e_i$. Clearly every non-void intersection $U_i\cap U_j\subset F^{-1}(e_i\cap e_j,1)$ is homeomorphic to $I$. 
Moreover, it is readily seen that each $U_i$ is contained in an open simply connected subset
$V_i\subset R$ with simply connected intersection $V_i\cap S$ because $\bar S$ is the strong deformation retract of some of its open neighbourhoods in $R$ (see, e.g., \cite[Thm.\,(3.3)]{W}).
\end{proof}

\sect{Proof of Theorem \ref{cor1} }
We retain notation of Lemma \ref{lem2.5}. We set
 \[
 \partial U_i^\circ=\overline{\partial U_i\setminus\partial S}\quad {\rm and}\quad
 W_i:=V_i\cap S,\quad 1\le i\le m.
 \]
Then $\partial U_i^\circ$  consists of two connected components homeomorphic to $I$ and $W_i$ is an open simply connected subset of $S$.  By the definition,
$\partial U_i^\circ\subset\overline{W}_i$.

Let  $A(W_i)\subset H^\infty(W_i)$ be the subalgebra of functions continuous up to the boundary.
 We denote by $\hat r_i:\mathfrak M(H^\infty(W_i'))\rightarrow\overline{W}_i$  the continuous surjective map induced by the transpose of the homomorphism
$A(W_i)\rightarrow H^\infty(W_i')$, $f\mapsto f\circ r_i$. 

Let $K$ be either $\partial U_i^\circ$ or its connected component. We set
\[
\widetilde K:=\hat r_i^{-1}(K). 
\]
\begin{Lm}\label{lem3.1}
The set $\widetilde K\subset\mathfrak M(H^\infty(W_i'))$ is holomorphically convex.
\end{Lm}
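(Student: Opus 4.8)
The plan is to uniformize $W_i$ and trivialize the covering, thereby reducing Lemma \ref{lem3.1} to a statement about the disk algebra, and then to separate each point of $\mathfrak M(H^\infty(W_i'))$ lying off $\widetilde K$ from $\widetilde K$ by a pull-back of a polynomial along $r_i$.

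First I would record the reduction. The domain $W_i$ is simply connected and is a proper subdomain of $R$ omitting the nonempty open set $R\setminus\overline S$ (as $W_i\subset S$); hence it is hyperbolic and, by the uniformization theorem, biholomorphic to $\Di$. Since the base $W_i$ is simply connected, the unbranched covering $r_i\colon W_i'\to W_i$ is trivial, so $W_i'$ is biholomorphic to the disjoint union of at most countably many copies of $\Di$. Next, because $U_i\subset V_i$ the compact set $K$ is disjoint from $\partial V_i$, so $K$ meets $\partial W_i$ only along $\partial S$, and near such points $\partial W_i$ is an analytic boundary arc of $S$. Passing to the local model in which a collar of a boundary circle of $S$ is identified with $\{z\in\Co:c<|z|\le 1\}$ and arguing as in the proof of Proposition \ref{prop2.1} (following \cite[Prop.\,4.2]{Br4}), after possibly shrinking $V_i$ we may take $W_i$ to be a Jordan domain, so that a conformal map $\psi\colon W_i\to\Di$ extends to a homeomorphism $\overline{W}_i\to\overline\Di$ identifying $A(W_i)$ with the disk algebra. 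Then $L:=\psi(K)$ is a compact subset of $\overline\Di$ homeomorphic to $K$, hence to $I$ or to $I\sqcup I$; in particular $\Co\setminus L$ is connected, so $L$ is polynomially convex.

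Now I would carry out the separation. Let $\xi_0\in\mathfrak M(H^\infty(W_i'))$ with $\xi_0\notin\widetilde K=\hat r_i^{-1}(K)$; by definition of $\widetilde K$ the point $p_0:=\hat r_i(\xi_0)$ lies in $\overline{W}_i\setminus K$, so $\psi(p_0)\in\overline\Di\setminus L$. Since $L$ is polynomially convex there is a polynomial $p$ with $\max_L|p|<|p(\psi(p_0))|$. Put $\phi:=p\circ\psi\in A(W_i)$, so $\max_K|\phi|<|\phi(p_0)|$, and set $f:=\phi\circ r_i\in H^\infty(W_i')$. Because $\hat r_i$ is, by construction, induced by the transpose of the homomorphism $A(W_i)\to H^\infty(W_i')$, $h\mapsto h\circ r_i$, the Gelfand transform of $f$ equals $\phi\circ\hat r_i$ on all of $\mathfrak M(H^\infty(W_i'))$. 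Therefore
\[
\max_{\widetilde K}|\hat f|=\max_{K}|\phi|<|\phi(p_0)|=|\hat f(\xi_0)| ,
\]
and as $\xi_0$ was arbitrary, $\widetilde K$ is holomorphically convex.

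I expect the main obstacle to be the boundary bookkeeping of the first step: one must verify that $V_i$ (hence $W_i$) can be chosen so that $\psi$ extends continuously up to $\partial W_i$ — ensuring both that $\phi=p\circ\psi$ genuinely lies in $A(W_i)$ and that the identity $\hat f=\phi\circ\hat r_i$ persists over the fibres $\hat r_i^{-1}(K\cap\partial S)$ — and that $K$ indeed touches $\partial W_i$ only along the analytic part $\partial S$. This I would treat exactly as in Proposition \ref{prop2.1}, working in a collar of each boundary circle of $S$ and using that $K\subset U_i$ stays away from $\partial V_i$. The remaining ingredients — polynomial convexity of arcs, and the fact that functions of $A(W_i)$ pull back along $r_i$ to elements of $H^\infty(W_i')$ whose Gelfand transforms factor through $\hat r_i$ — are routine.
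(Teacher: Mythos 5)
Your proof is correct and takes essentially the same route as the paper: reduce to polynomial convexity of the image of $K$ under a Riemann map (an arc, or a compact set with connected complement, in the plane) and then separate points of $\mathfrak M(H^\infty(W_i'))$ by pullbacks under the transpose map $\hat r_i$. The one difference is that the paper applies the Riemann mapping theorem to $V_i$, in which $K\Subset V_i$ is compactly contained, so the separating functions (polynomials in $\psi_i$) are automatically holomorphic in a neighbourhood of $K$ and no Carath\'eodory-type boundary extension of the conformal map is needed --- this sidesteps the ``Jordan domain'' bookkeeping that your version of the argument must still carry out.
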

\begin{proof}
By our construction the open set $V_i\setminus K$ is connected.
By the Riemann mapping theorem there is a biholomorphic map $\psi_i$ of  $V_i$ onto $\Di$. Then $\Di\setminus\psi_i(K)$ is a connected open subset of 
$\Di$. This implies that the compact set $\psi_i(K)\subset\Co$ is polynomially convex. Hence, $K\Subset V_i$ is holomorphically convex with respect to the algebra $H^\infty(V_i)$ and so it is holomorphically convex in $\overline{W}_i$ with respect to the algebra $A(W_i)$. Since $\hat r_i$ is a surjection onto $\overline{W}_i$ and $\widetilde K\subset\mathfrak M(H^\infty(W_i'))$ is the preimage of $K$, it is  holomorphically convex.
\end{proof}

Due to  Remark \ref{rem2.2} the transpose of the restriction homomorphism $H^\infty(S')\rightarrow H^\infty(W_i')$  induces a continuous map $s_i: \mathfrak M(H^\infty(W_i'))\rightarrow \mathfrak M(H^\infty(S'))$ with image $\hat r ^{-1}(\overline{W}_i)$ one-to-one on $s_i^{-1}(\hat r^{-1}(V_i\cap \bar S))$.

Let $Z\subset\mathfrak M(H^\infty(S'))$ be a hull and $g\in C(Z,\Co^*)$. To prove the theorem we have to extend $g$ to a function $\tilde g\in C(\mathfrak M(H^\infty(S')),\Co^*)$, see Section~2.2 above.

Clearly $Z_i:=s_i^{-1}(Z)$ is a hull for the algebra $H^\infty(W_i')$ and $s_i^*g\in C(Z_i,\Co^*)$. Since $W_i':=r^{-1}(W_i)$ is biholomorphic to $\Di\times F$, the Treil theorem implies that there is $g_i\in C(\mathfrak M(H^\infty(W_i')),\Co^*)$ which extends $s_i^*g\, (:=g\circ s_i)$. Hence $\tilde g_i:=g_i\circ s_i^{-1}\in C(\hat r^{-1}(V_i\cap\bar S),\Co^*)$ extends 
$g|_{Z\cap \hat r^{-1}(U_i)}$ (because $U_i\subset V_i\cap\bar S$). If $Z\cap \hat r^{-1}(U_i)=\emptyset$, we define $\tilde g_i=1$.

Next, we order the sets of the cover $(U_i)_{i=1}^m$ as follows. Choose some $U_{i_1}\subset\{U_1,\dots, U_m\}$.
If $U_{i_p}$ is already chosen, we choose $U_{i_{p+1}}$ so that 
\[
U_{i_{p+1}}\cap (\cup_{j=1}^p U_{i_j})\ne\emptyset .
\]
This is always possible because $\bar S$ is a connected set. We extend $g$ by induction on the indices of the order. 

For $j=1$ we set $\tilde g=\tilde g_{i_1}$ on $\hat r^{-1}(U_{i_1})$. Suppose that
$\tilde g$ is already defined on $\cup_{j=1}^p \hat r^{-1}(U_{i_j})$. Let us define it on $\cup_{j=1}^{p+1} \hat r^{-1}(U_{i_j})$. To this end let 
\[
g_{p,p+1}:=\tilde g \tilde g_{p+1}^{-1}\quad {\rm on}\quad \hat r^{-1}(\cup_{j=1}^p U_{i_j})\cap\hat r^{-1}(U_{i_{p+1}}).
\]
By the definition, the above intersection, say $X$, is either the preimage  of $\partial U_{i_{p+1}}^o$ or  its connected component under $\hat r$. Due to Lemma \ref{lem3.1}, the set $s_{i_{p+1}}^{-1}(X)\cup Z_{i_{p+1}}$ is holomorphically convex with respect to $H^\infty(W_{i_{p+1}})$. Moreover, $s_{i_{p+1}}^*(g_{p,p+1})\in C(s_{i_{p+1}}^{-1}(X),\Co^*)$ and equals $1$ on $s_{i_{p+1}}^{-1}(X)\cap Z_{i_{p+1}}$. Hence, it can be extended to a function in $C(s_{i_{p+1}}^{-1}(X)\cup Z_{i_{p+1}},\Co^*)$ attaining value $1$ on $Z_{i_{p+1}}$.
Due to Lemma \ref{lem2.3} the extended function can further be extended to a function from 
$C(s_{i_{p+1}}^{-1}(\hat r^{-1}(U_{i_{p+1}})),\Co^*)$. Composing this extension with $s_{i_{p+1}}^{-1}$ we obtain an extension $\tilde g_{p,p+1}$ of $g_{p,p+1}$  equal to $1$ on $Z\cap \hat r^{-1}(U_{i_{p+1}})$. Let us define
\[
\tilde g|_{\hat r^{-1}(U_{i_{p+1}})}:=\tilde g_{p+1}\tilde g_{p,p+1}.
\]
Then $\tilde g|_{\hat r^{-1}(U_{i_{p+1}})}$ extends $g|_{Z\cap\hat r^{-1}(U_{i_{p+1}})}$ and 
\[
\tilde g|_{\hat r^{-1}(U_{i_{p+1}})}\cdot\tilde g^{-1}|_{\hat r^{-1}(\cup_{j=1}^p U_{i_j})}=\tilde g_{p+1}\tilde g_{p,p+1}\tilde g^{-1}=1
\quad {\rm on}\quad \hat r^{-1}(\cup_{j=1}^p U_{i_j})\cap\hat r^{-1}(U_{i_{p+1}}),
\]
i.e., $\tilde g|_{\hat r^{-1}(U_{i_{p+1}})}$ is the required extension of $\tilde g|_{\hat r^{-1}(\cup_{j=1}^p U_{i_j})}$ to $\cup_{j=1}^{p+1} U_{i_j}$. This completes the proof of the induction step and hence of the theorem.

\sect{Proofs of Theorems \ref{te1} and \ref{te2}}
\begin{proof}[Proof of Theorems \ref{te1}]
Without loss of generality we may assume that $S'$ is a connected unbranched covering of $S$.  Let $f,g\in H^\infty(S')$, $\|f\|_{H^\infty(S')}\le 1$, $\|g\|_{H^\infty(S')}\le 1$ and
\begin{equation}\label{eq4.1}
\inf_{z\in S'}(|f(z)|+|g(z)|)=:\delta>0.
\end{equation}
Due to Theorem \ref{cor1} there exists a function $G\in H^\infty(S')$ such that the function $f+gG$ is invertible in $H^\infty(S')$.
By $\mathcal G_{f,g,\delta,S'}$ we denote the class of such functions $G$.
We have to prove that
\begin{equation}\label{eq4.2}
C=C(\delta,S):=\sup_{f,g,S'}\inf_{G\in \mathcal G_{f,g,\delta,S'}} 
\max\bigl\{\|G\|_{H^\infty(S')},\|(f+gG)^{-1}\|_{H^\infty(S')}\bigr\}
\end{equation}
is finite. (Here the first supremum is taken over all functions $f,g$ satisfying the above hypotheses and all connected unbranched coverings $S'$ of $S$.)

Let $\{S_i'\}_{i\in\N}$ and $\{f_i\}_{i\in\N},\{g_i\}_{i\in\N}$, $f_i,g_i\in H^\infty(S_i')$, be sequences satisfying assumptions of the theorem such that
\begin{equation}\label{eq4.3}
C=\lim_{i\rightarrow\infty}\inf_{G\in \mathcal G_{f_i,g_i,\delta,S_i'}} 
\max\bigl\{\|G\|_{H^\infty(S_i')},\|(f_i+g_iG)^{-1}\|_{H^\infty(S_i')}\bigr\}.
\end{equation}
The disjoint union $S':=\sqcup_{i\in\N}\, S_i'$ is clearly an unbranched covering of $S$ and
functions $f,g\in H^\infty(S')$ defined by the formulas
\[
f|_{S_i'}:=f_i,\quad g|_{S_i'}:=g_i,\quad i\in\N,
\]
are of norms $\le 1$ and satisfy condition \eqref{eq4.1} on $S'$.
Then due to Theorem \ref{cor1} there exists a function $G\in H^\infty(S')$ such that the function $f+gG$ is invertible in $H^\infty(S')$.
We set
\[
G_i:=G|_{S_i'},\quad i\in\N.
\]
Then due to \eqref{eq4.3}
\[
C\le \sup_{i\in\N}\max\bigl\{\|G_i\|_{H^\infty(S_i')},\|(f_i+g_iG_i)^{-1}\|_{H^\infty(S_i')}\bigr\}=\max\bigl\{\|G\|_{H^\infty(S')},\|(f+gG)^{-1}\|_{H^\infty(S')}\bigr\}.
\]
This completes the proof of the theorem.
\end{proof}
\begin{proof}[Proof of Theorem \ref{te2}]
According to \cite[Thm.\,1.3\,(b)]{Br4} the covering dimension of the maximal ideal space $\mathfrak M(H^\infty(S'))$ is 2. In turn, due to the Browder theorem \cite[Thm.\,6.11]{Bro} the second homotopy group $\pi_2(SL_n(\Co))=0$ (here $SL_n(\Co)$ is the group of $n\times n$ complex matrices with determinant 1). These two facts and the Hu theorem \cite[(11.4)]{Hu} imply that the homotopy classes of the continuous mappings $f:\mathfrak M(H^\infty(S'))\rightarrow SL_n(\Co)$ are in a one-to-one correspondence with the elements of the first \v{C}ech cohomology group $H^1(\mathfrak M(H^\infty(S')),\pi_1(SL_n(\Co)))$. This group is trivial because the space $SL_n(\Co)$ is simply connected. Hence, each $f\in C(\mathfrak M(H^\infty(S')), SL_n(\Co))$ is homotopic to a constant map with value $1_n$ (the unit of $SL_n(\Co)$), i.e., the space  $C(\mathfrak M(H^\infty(S')), SL_n(\Co))$ is connected. Next, by the Arens theorem \cite{A} the Gelfand transform induces a bijection between the sets of connected components of the spaces $SL_n(H^\infty(S'))$ and $C(\mathfrak M(H^\infty(S')), SL_n(\Co))$. Therefore the group $SL_n(H^\infty(S'))$ is connected as well. In particular, each matrix in $SL_n(H^\infty(S'))$
can be presented as a finite product of elementary matrices. (This is a well-known fact; it can be deduced, e.g., from \cite[Thm.\,2.1]{BRS}.) Then since $H^\infty(S')$ is a $B$-ring (by Theorem \ref{cor1}),  Lemma 9 and Remark 10 of \cite{DV} imply that 
each matrix $F\in SL_n(H^\infty(S'))$ can be presented as a  product of at most $(n -1)(\frac{3n}{2} + 1)$ elementary matrices. Let us show that if 
\begin{equation}\label{eq4.4}
\|F\|_{M_n(H^\infty(S'))}\le M,
\end{equation}
these matrices would be chosen so that their norms were bounded from above by a constant depending only on $M$, $n$ and $S$.

As before we may assume that $S'$ is connected. Let $\mathcal F_{M,S',n}$ be the class of matrices $F\in SL_n(H^\infty(S'))$ satisfying \eqref{eq4.4}.  For every $F\in\mathcal F_{M, S',n}$ by $\Pi_{F,M,S',n}$ we denote the set of all possible products of $F$
by at most $(n -1)(\frac{3n}{2} + 1)$ elementary matrices. By the above arguments the set $\Pi_{F,M,S',n}$ is non-void. For each $\pi\in\Pi_{F,M,S',n}$ by 
$\|\pi\|$ we denote maximum of norms of elementary matrices in $\pi$.
We  have to prove that
\begin{equation}\label{eq4.5}
C=C(S,M,n):=\sup_{S',F\in\mathcal F_{M,S',n
}}\inf_{\pi\in\Pi_{F,M,S',n}}\|\pi\|<\infty;
\end{equation}
here $S'$ runs over all connected unbranched coverings of $S$.

Let $S_i'$ and $F_i\in \mathcal F_{M,S_i',n}$, $i\in\N$, be  such that
\begin{equation}\label{eq4.6}
C=\lim_{i\rightarrow\infty}\inf_{\pi\in\Pi_{F_i,M,S_i',n}}\|\pi\|.
\end{equation}
It is clear that the disjoint union $S':=\sqcup_{i\in\N}\, S_i'$ is an unbranched covering of $S$ and
the matrix $F\in H^\infty(S')$ defined by the formula
\[
F|_{S_i'}:=F_i,\quad i\in\N,
\]
belongs to the class $\mathcal F_{M,S',n}$. Then there is $\pi\in\Pi_{F,M,S',n}$. By $\pi_i$ we denote the product obtained by restriction of elementary matrices in $\pi$ to $S_i'$. Then each $\pi_i\in \Pi_{F_i,M,S_i',n}$ and so
due to \eqref{eq4.6}
\[
C\le \sup_{i\in\N}\|\pi_i\|=\|\pi\|<\infty.
\]
This completes the proof of the theorem.
\end{proof}

\end{document}